\newcommand{\normmm}[1]{{\left\vert\kern-0.25ex\left\vert\kern-0.25ex\left\vert #1
    \right\vert\kern-0.25ex\right\vert\kern-0.25ex\right\vert}} % \normmm{u}-->|||u|||
\newtheorem{Proposition}{Proposition}
\newtheorem{Lemma}[Proposition]{Lemma}
\newtheorem{Definition}[Proposition]{Definition}
\newtheorem{Theorem}[Proposition]{Theorem}
\begin{document}

\title{Analysis of the time domain acoustic scattering from open cavities}

\author{Bo Chen \footnote{College of Science, Civil Aviation University of China, Tianjin, P. R. China; {\em Email: charliecb@163.com}}\and
Fuming Ma  \footnote{School of Mathematics, Jilin University, Changchun, P. R. China.  {\em Email: mfm@jlu.edu.cn}}\and
Yukun Guo \footnote{Department of Mathematics, Harbin Institute of Technology, Harbin, P. R. China. {\em Email: ykguo@hit.edu.cn}}
}
\date{}

\maketitle

\begin{abstract}
This paper is concerned with the analysis of the time domain acoustic scattering from two-dimensional open cavities. A transparent boundary condition is developed to reformulate the scattering problem into an equivalent initial boundary value problem in the interior domain of the open cavity. The well-posedness, that is, the existence, uniqueness and stability of the solution to the reduced problem are studied via a ``Laplace domain'' to time domain analysis. Moreover, time domain boundary integral equations for the reduced problem are established.

{\bf Keywords}: time domain; acoustic scattering; open cavity; transparent boundary condition; well-posedness; boundary integral equation
\end{abstract}

%=========================================================%

\section{Introduction}

The scattering problems of acoustic waves have attracted extensive attention due to their significance in industry and medical equipment. Roughly speaking, the acoustic scattering problems can be divided into the frequency domain problem with time-harmonic or nearly time-harmonic wave field and the time domain problem with time-dependent non-harmonic wave field. Frequency domain problem, which mainly deals with the Helmholtz equation, can be taken as a simplification of the time domain case and is a more mature research \cite{Kress1983Integral, Zhang2013A, wang2017fourier, Liu2008On}. The analytical methods of time domain problems are diversiform, such as the direct analysis \cite{wang2017mathematical} and the analysis related to the so-called ``Laplace domain'' problem given by the Fourier-Laplace transform \cite{sayas2011retarded, ChenB2016time}. Since time domain scattering problems arise more naturally in application areas, in recent years, time domain scattering \cite{ha2003retarded, Laliena2009Theoretical, Gao2016Analysis} and inverse scattering \cite{Guo2015The, Monk2016An, Yukun2016A, guo2013toward} problems have attracted more and more attentions.

We mainly care about the scattering of acoustic waves in homogeneous, isotropic background medium. In this paper, the unbounded scatterer is chosen as two-dimensional open cavities embedded into the half-plane. The corresponding frequency domain problem has been studied in \cite{Ammari2000An, Feng2005Uniqueness, Bao2005A,Lai2014A,Li2012An}. In recent years, the time domain electromagnetic scattering from open cavities has been mathematically studied in \cite{Li2015Analysis, Van2006Analysis,Zhiming2008ON,Li2018A}. However, to the best of our knowledge, there was no rigorous mathematical analysis of the time domain acoustic scattering from open cavities.

In comparison to the scattering from a bounded scatterer, the unbounded nature of the open cavities makes the analysis more challenging. In order to overcome this difficulty, we develop a transparent boundary condition (TBC) to reformulate the original scattering problem with unbounded scatterers equivalently into an initial boundary value problem in the interior domain of the open cavity. Then the well-posedness of the solution to the reduced problem is given via a Fourier argument and the analysis of the so-called ``Laplace domain'' problem given by the Fourier-Laplace transform of the time domain problem. Finally, a retarded potential boundary integral equation (RPBIE) method \cite{ha2003retarded} is used to solve the reduced problem and the convolution quadrature (CQ) method is involved to turn the calculation of the time domain problem into the calculation of the classic frequency problems. The corresponding integral equation method in frequency domain is analyzed in \cite{Jiang2012Numerical, Ammari2002Analysis, Li2013A, Sun2017Indirect, Bao2012Stability}. Our work on the time domain scattering from open cavities is inspired by these frequency domain investigations and the related time domain analyses of the Maxwell equations and the acoustic waves \cite{sayas2011retarded, Banjai2012Wave, Banjai2013Stable}.

The outline of this paper is as follows. We present the model scattering problem and the relevant spaces in Section 2. Then a TBC is developed to reformulate the time domain scattering problem into a reduced initial boundary value problem in a bounded domain in Section 3. In Section 4, the well-posedness of the reduced problem and the equivalence of the two time domain scattering problems are proved. In Section 5, RPBIEs for the reduced time domain problem and the CQ method for the computation of the time domain problems are established. The last section is devoted to the conclusion of this paper, as well as comments on our future works.

\section{Problem setting}\label{sec301}
\subsection{Model problem}
Consider the scattering of transient acoustic waves by an open cavity embedded in a ground plane. The space above the ground plane is filled with homogeneous background medium. The ground plane and the wall of the cavity are assumed to be sound-soft. Adopting Cartesian coordinates $(x_1,x_2,x_3)$, the cavity and the incident field are both assumed to be invariant with respect to $x_3$. Thus the three dimensional scattering problem can be simplified to the two dimensional case.

The incident field is chosen as the cylindrical wave emitted by a line source parallel to the $x_3$-axis in the half-space $\{(x_1,x_2,x_3)\in\mathbb{R}^3:x_2>0\}$. Denote by $y_{3d}:=(y,y_3)$ the coordinate of a source point with  $y:=(y_1,y_2)\in\mathbb{R}^2$. Consider that a causal signal $\lambda(t)$ (in particular $\lambda(t)=0$ for $t<0$) is simultaneously emitted by all the source points on the excitation line. Then the incident field has the form (see \cite{sayas2011retarded})
\begin{equation*}
u^i(t,x;y):=k(t,x;y)\ast \lambda(t),\quad  t\in \mathbb{R}, \ x\in \mathbb{R}^2\backslash\{y\},
\end{equation*}
where $k\ast\lambda$ is the time convolution of $k$ and $\lambda$ and
$$
k(t,x;y):=\frac{H(t-c^{-1}|x-y|)}{2\pi\sqrt{t^2-c^{-2}|x-y|^2}}
$$
is the Green's function of the operator $c^{-2}\partial_{tt}-\Delta$ in the free space $\mathbb{R}\times\mathbb{R}^2$. In this paper, $\Delta$ is the Laplacian in $\mathbb{R}^2$, $\partial_{t}=\partial/\partial t$, $\partial_{tt}=\partial^2/\partial t^2$, $H$ is the Heaviside step function and $c$ is the constant wave speed of the homogeneous background medium. For the sake of simplicity, we choose $c\equiv 1$ throughout the rest of this paper.

For the two-dimensional scattering problem, denote by $\mathbb{R}_+^2:=\{(x_1,x_2)\in\mathbb{R}^2:x_2>0\}$ the upper half-plane and $\mathbb{R}_0^2:=\{(x_1,x_2)\in\mathbb{R}^2:x_2=0\}$ the $x_1$-axis. The source point $y$ is assumed to be located in the upper half-plane $\mathbb{R}_+^2$. For $x=(x_1,x_2)$, define $x^\rho:=(x_1,-x_2)$. Then the reflected field $u^\rho$ is defined as
\begin{equation*}
u^\rho(t,x;y):=-k(t,x;y^\rho)\ast \lambda(t), \quad t\in\mathbb{R},\, x\in \mathbb{R}^2\backslash\{y^\rho\}.
\end{equation*}

\begin{figure}
\centering
\includegraphics[width=0.6\textwidth]{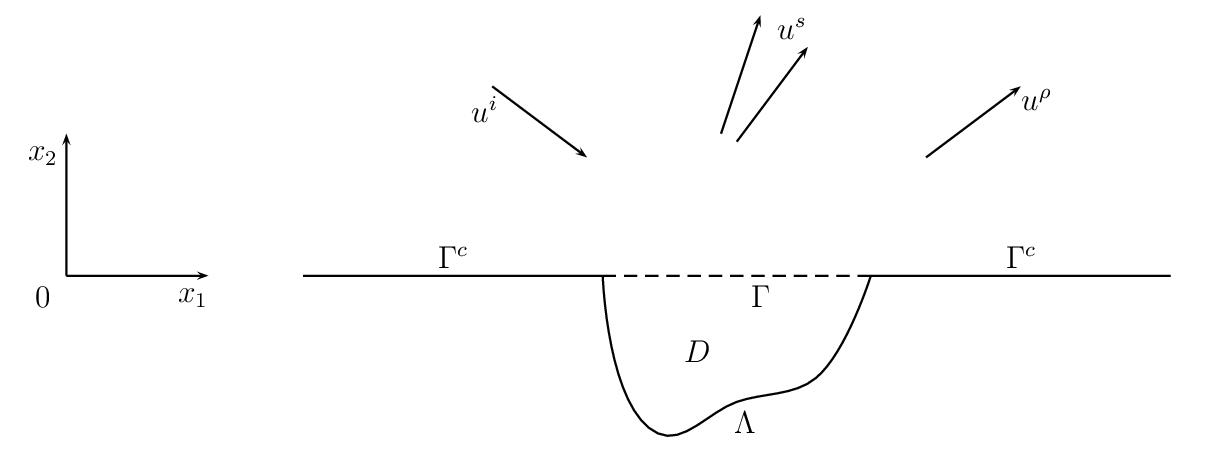}
\caption{Sketch of the two dimensional scattering problem.}\label{fig301}
\end{figure}

We refer to Figure \ref{fig301} for a geometrical illustration of the model scattering problem. Denote by $D$ the cavity with the boundary $\partial D=\Gamma\cup \Lambda$. The ground line is $\mathbb{R}_0^2=\Gamma\cup\Gamma^c$. The wall $\Lambda$ of the cavity is assumed to be $C^2$-smooth and $\mathbb{R}_0^2$ is not a tangent to $\Lambda$.

The total field $u$, which is divided into the incident field $u^i$, the reflected field $u^{\rho}$ and the scattered field $u^s$, satisfies the wave equation, Dirichlet boundary condition and initial conditions
\begin{align}
\label{301} u_{t t}-\Delta u=f & \quad \text{in}\;\;\mathbb{R}\times (D\cup\mathbb{R}_+^2), \\
\label{302}u=0  &\quad \text{on}\;\;\mathbb{R}\times(\Gamma^c\cup \Lambda),\\
\label{303}u(0,\cdot)=u_t(0,\cdot)=0  &\quad \text{in}\;\;D\cup\mathbb{R}_+^2,
\end{align}
In which the source term $f$ satisfies $\mathrm{supp}(f)\subset\mathbb{R}_+\times\mathbb{R}_+^2$.

%--------------------------------------------------------

\subsection{Space-time Sobolev spaces}
We recall some notation concerning Sobolev spaces (for details, see \cite{sayas2011retarded, Chen2010sampling}). Given a Lipschitz domain $\Omega\in\mathbb{R}^d$, denote
$$
(u,v)_{\Omega}:=\int_{\Omega}u v,\quad (\nabla u,\nabla v)_{\Omega}:=\int_{\Omega}\nabla u\cdot \nabla v.
$$
On this basis, define the norm with
$$\|u\|_{\Omega}:=\left[(u,\overline{u})_{\Omega}\right]^{1/2},$$
where the overline denotes the complex conjugate. Then the $H^1(\Omega)$-norm is
$$\|u\|_{H^1(\Omega)}:=\left(\|u\|_{\Omega}^2+\|\nabla u\|_{\Omega}^2\right)^{1/2}.$$
For $c>0$, define $\underline{c}:=\min\{1,c\}$ and the norm
$$\normmm{u}_{c,\Omega}:=\left(\|\nabla u\|_{\Omega}^2+c^2\|u\|_{\Omega}^2\right)^{1/2}.$$
Define the space
$$H_\Delta^1(\Omega):=\{u\in H^1(\Omega):\Delta u\in L^2(\Omega)\}$$
with the norm
$$\|u\|_{\Delta}:=\left(\|u\|_{\Omega}^2+\|\nabla u\|_{\Omega}^2+\|\Delta u\| _{\Omega}^2\right)^{1/2}.$$

Then we consider the trace spaces on $\Gamma$. Denote by
$$\langle \xi,\eta \rangle _{\Gamma}:=\int_{\Gamma}\xi\eta\,\mathrm{d}_{\Gamma}$$
the $L^2(\Gamma)$ inner product on $\Gamma$. Define
\begin{equation*}
\tilde{H}^{1/2}(\Gamma):=\{V\in H^{1/2}(\Gamma):\,E_0(V)\in H^{1/2}(\mathbb{R}^2_0)\}.
\end{equation*}
where $E_0$ is the extension operator from $H^{1/2}(\Gamma)$ to $H^{1/2}(\mathbb{R}^2_0)$ defined by
\begin{equation*}
E_0(V)(x):=
\begin{cases}
V(x), & x\in\Gamma, \\
0, & x\in\mathbb{R}^2_0\backslash\Gamma.
\end{cases}
\end{equation*}
In fact, $\tilde{H}^{1/2}(\Gamma)$ is the dual of $H^{-1/2}(\Gamma)$ with respect to the $L^2(\Gamma)$-inner product \cite{Ammari2000An,Hsiao2008Boundary}.
In consideration of the scattering problem, we define the spaces
\begin{align*}
H_E^1(D):=&\{V\in H^1(D):\,V|_\Lambda=0\;\text{and}\;\;V|_{\Gamma}\in\tilde{H}^{1/2}(\Gamma)\},\\
H_{\Delta,E}^1(D):=&\{V\in H_{\Delta}^1(D):\,V|_\Lambda=0\;\text{and}\;\;V|_{\Gamma}\in\tilde{H}^{1/2}(\Gamma)\},\\
H_{\Delta,E}^1(D\cup\mathbb{R}^2_+):=&\{V\in H_{\Delta}^1(D\cup\mathbb{R}^2_+):\,V=0\;\text{on}\;\;\Gamma^c\cup \Lambda\}
\end{align*}
with the norm of $H^1(D)$, $H_{\Delta}^1(D)$ and $H^1(D\cup\mathbb{R}^2_+)$, respectively.

Denote $\mathbb{C}_\sigma:=\{\omega\in\mathbb{C}:\,\mathrm{Im}(\omega)\geq\sigma>0\}$ and in particular $\mathbb{C}_+:=\{\omega\in\mathbb{C}:\,\mathrm{Im}(\omega)>0\}$. The Fourier-Laplace transform is defined by
\begin{equation}\label{330}
\mathcal{L}[f](\omega):=\int_{-\infty}^{\infty}\mathrm{e}^{\mathrm{i}\omega t}f(t)\,\mathrm{d}t,\quad \omega \in\mathbb{C}_\sigma.
\end{equation}
Correspondingly, the inversion formula is
\begin{equation}\label{331}
\mathcal{L}^{-1}[\varphi](t):=\frac{1}{2\pi}\int_{-\infty+\mathrm{i}\sigma}^{\infty+\mathrm{i}\sigma}\mathrm{e}^{-\mathrm{i}\omega t}\varphi(\omega)\,\mathrm{d}\omega.
\end{equation}

To analyse the time domain scattering problem, we recall some notation concerning space-time Sobolev spaces. For a Hilbert space $X$, denote by $\mathcal{D'}(X)$ and $\mathcal{S'}(X)$ the space of $X$-valued distributions and tempered distributions on the real line, respectively. For $\sigma\in\mathbb{R}$, define the spaces
$$\mathcal{L}'_\sigma(\mathbb{R},X):=\{f\in \mathcal{D'}(X):\,\mathrm{e}^{-\sigma t}f\in \mathcal{S'}(X)\}$$
and
$$\mathcal{L}'_\sigma(\mathbb{R}_{+},X):=\left\{f\in \mathcal{L}'_\sigma(\mathbb{R},X):\,f(t)=0,\;\forall t<0\right\}.$$
For $\sigma\in\mathbb{R}$ and $p\in\mathbb{R}$, define the space
$$
H_\sigma^p(\mathbb{R},X):=\left\{f\in\mathcal{L}'_\sigma(\mathbb{R},X):\,\int_{-\infty+\mathrm{i}\sigma}^{\infty+\mathrm{i}\sigma}|\omega |^{2p}\|\mathcal{L}[f](\omega )\|_X^2\,\mathrm{d}\omega <\infty\right\}
$$
with the norm
\begin{equation}\label{001}
\|f\| _{H_\sigma^p(\mathbb{R},X)}:=\left(\int_{-\infty+\mathrm{i}\sigma}^{\infty+\mathrm{i}\sigma}|\omega |^{2p}\|\mathcal{L}[f](\omega )\|_X^2\,\mathrm{d}\omega\right)^{1/2}.
\end{equation}

Taking into account the causality, define the space
$$
H_\sigma^p(\mathbb{R}_{+},X):=\left\{f\in H_\sigma^p(\mathbb{R},X):\,f(t)=0,\;\forall t<0\right\}
$$
with the norm of $H_\sigma^p(\mathbb{R},X)$.

%=============================================================

\section{The reduced problem}\label{sec302}

In this section, a TBC is proposed to formulate an equivalent initial boundary problem of \eqref{301}--\eqref{303} in the bounded domain $D$. Since the source points are in the upper half-plane $\mathbb{R}_+^2$ and separated from the unbounded scatterer, the scattered field $u^s:=u-u^i-u^\rho$ satisfies the homogeneous wave equation (\cite{sayas2011retarded})
\begin{equation}\label{304}
u^s_{t t}-\Delta u^s=0  \quad \text{in}\;\;\mathbb{R}\times \mathbb{R}_+^2.
\end{equation}
Define $u^0:=u|_{\mathbb{R}\times\mathbb{R}_0^2}$. Note that $u^i+u^\rho=0$ on $\mathbb{R}\times\mathbb{R}_0^2$. Then
\begin{equation}\label{305}
u^s=u^0 \quad \text{on} \;\mathbb{R}\times\mathbb{R}_0^2.
\end{equation}

We intend to establish a TBC on $\Gamma$. For the corresponding frequency domain problem \cite{Ammari2000An}, the Fourier transform with respect to $x_1$ is employed to get the differential equations of $x_2$. However, since an additional variable $t$ is involved for time domain problems, an additional integral transform is needed for the analysis. After some serious thought, we find the Fourier-Laplace transform to be effective and befitting the well-posedness analysis.

%---------------------------------------------------

\subsection{TBC in the ``Laplace domain''}

Taking formally the Fourier-Laplace transform of \eqref{304} and \eqref{305} with respect to $t$ implies
\begin{align}
\label{306} \Delta U^s(\omega,\cdot)+\omega^2U^s(\omega,\cdot)=0 & \quad \text{in}\;\;\mathbb{R}_+^2,\\
\label{307} U^s(\omega,\cdot)=U^0(\omega,\cdot) & \quad \text{on}\ \mathbb{R}_0^2,
\end{align}
where $\omega\in\mathbb{C}_+$, $U^s$ and $U^0$ are respectively the Fourier-Laplace transform of $u^s$ and $u^0$ with respect to $t$.

Furthermore, taking the Fourier transform of \eqref{306} and \eqref{307} with respect to $x_1$ yields
\begin{align}
\label{309}\left(\partial_{x_2}^2+(\omega^2-\xi_1^2)\right)\mathcal{F}_{x_1}[U^s](\omega,\xi_1,x_2)=0,&\quad x_2>0,\\
\label{310}\mathcal{F}_{x_1}[U^s](\omega,\xi_1,x_2)=\mathcal{F}_{x_1}[U^0](\omega,\xi_1,x_2),&\quad x_2=0.
\end{align}

The causality of the time domain problem implies the finite energy of the acoustic wave at each time instant \cite{sayas2011retarded}. Then $\omega\in\mathbb{C}_+$ and the finite energy implies
$$\mathcal{F}_{x_1}[U^s]=\mathrm{e}^{\mathrm{i} \omega\sqrt{1-\xi_1^2/\omega^2}x_2} \mathcal{F}_{x_1}[U^0].$$
Here $\sqrt{a}$ is the principle square root of $a\in\mathbb{C}$, that is, $\mathrm{Re}(\sqrt{a})\geq 0$. For $\omega\in\mathbb{C}_+$, set $\omega=\eta+\mathrm{i}\sigma$, $\sigma>0$. Notice that
$$
\eta\mathrm{Im}\left(1-\frac{\xi_1^2}{\omega^2}\right)=\frac{2\xi_1^2\eta^2\sigma}{(\eta^2+\sigma^2)^2}\geq 0.
$$
Moreover, the definition of the principle square root implies
$$
\mathrm{Im}\left(1-\frac{\xi_1^2}{\omega^2}\right)\mathrm{Im}\left(\sqrt{1-\frac{\xi_1^2}{\omega^2}}\right)\geq 0.
$$
Thus
$$
\mathrm{Re}\left(\mathrm{i}\omega\sqrt{1-\frac{\xi_1^2}{\omega^2}}\right)= -\sigma\mathrm{Re}\left(\sqrt{1-\frac{\xi_1^2}{\omega^2}}\right)-\eta\mathrm{Im}\left(\sqrt{1-\frac{\xi_1^2}{\omega^2}}\right)\leq 0.
$$

The inverse Fourier transform implies
$$
U^s(\omega,x_1,x_2)=\frac{1}{2\pi}\int_{\mathbb{R}}\mathrm{e}^{\mathrm{i}\xi_1x_1} \mathrm{e}^{\mathrm{i} \omega\sqrt{1-\xi_1^2/\omega^2}x_2} \mathcal{F}_{x_1}[U^0](\omega,\xi_1,0)\,\mathrm{d}\xi_1.
$$
Then we have
\begin{equation}\label{311}
\frac{\partial U^s}{\partial \nu}\bigg|_{x_2=0}=\mathcal{T}^F U^0,
\end{equation}
where the operator $\mathcal{T}^F$ is defined by
\begin{equation*}
(\mathcal{T}^F V)(\omega,x_1,x_2):=\frac{\mathrm{i} \omega}{2\pi}\int_{\mathbb{R}}\mathrm{e}^{\mathrm{i}\xi_1x_1} \sqrt{1-\xi_1^2/\omega^2} \mathcal{F}_{x_1}[E_0(V)](\omega,\xi_1,0)\, \mathrm{d}\xi_1.
\end{equation*}

Define
\begin{equation*}
G(\omega,\cdot):=\frac{\partial (U^i(\omega,\cdot)+U^\rho(\omega,\cdot))}{\partial \nu}\quad \text{on} \;\Gamma,
\end{equation*}
where $U^i$ and $U^\rho$ are, respectively, the Fourier-Laplace transform of $u^i$ and $u^\rho$ with respect to $t$. Then we have the boundary condition
$$
\frac{\partial U(\omega,\cdot)}{\partial \nu}=\mathcal{T}^F(\omega,\cdot)U(\omega,\cdot)+G(\omega,\cdot) \quad \text{on} \;\Gamma,
$$
which is a TBC in the so-called ``Laplace domain''.

%---------------------------------------------------

\subsection{TBC in the time domain}

Then the inverse Fourier-Laplace transform is needed to formulate a TBC back in the time domain. Note that there are restrictions to use the strong inversion formula \eqref{331}. Consider the inverse Fourier-Laplace transform of $F(\omega)$, $\mathrm{Im}(\omega)=\sigma>0$. Assume that $F(\omega)$ satisfies
\begin{equation}\label{assume}
  |F(\omega)|\leq C_F(\sigma)|\omega|^{\mu},
\end{equation}
in which $\mu\in\mathbb{R}$ and $C_F:\mathbb{R}_+\rightarrow\mathbb{R}_+$ is a non-increasing function such that
$$C_F(\sigma)\leq\frac{M}{\sigma^\iota},\quad \forall \sigma\in (0,1),$$
where the constants $\iota>0,M>0$.

Again, set $\omega=\eta+\mathrm{i}\sigma$. When $\mu<-1$, the inverse Fourier-Laplace transform is defined as
$$f(t)=\mathcal{L}^{-1}[F](t):=\frac{1}{2\pi}\int_{-\infty+\mathrm{i}\sigma}^{\infty+\mathrm{i}\sigma}\mathrm{e}^{-\mathrm{i}\omega t}F(\omega)\mathrm{d}\omega=\frac{1}{2\pi}\int_{-\infty}^{\infty}\mathrm{e}^{\sigma t}\mathrm{e}^{-\mathrm{i}\eta t}F(\eta+\mathrm{i}\sigma)\mathrm{d}\eta.$$
Denote
$$\chi=\frac{\sigma^2}{\sigma^2+\eta^2}.$$
The assumption \eqref{assume} implies that $f(t)$ is well defined for $t\in\mathbb{R}$ and
$$|f(t)|\leq\frac{1}{2\pi}C_F(\sigma)\sigma^{1+\mu}\mathrm{e}^{\sigma t}B\left(\frac{1}{2},-\frac{\mu+1}{2}\right),$$
where $B$ is the Euler beta function
$$B(x,y)=\int_0^1\chi^{x-1}(1-\chi)^{y-1}\,\mathrm{d}\chi.$$

A contour integration argument implies that $f(t)$ is independent of $\sigma>0$. Taking the limit as $\sigma\rightarrow\infty$, we see that $f(t)=0$, $\forall t<0$. Moreover, set $\sigma=t^{-1}$ for $t>1$, we have the estimation
$$|f(t)|\leq M t^{\iota-(\mu+1)}B\left(\frac{1}{2},-\frac{\mu+1}{2}\right).$$
Then $f(t)$ is a causal function with polynomial growth. Obviously we have $f(t)\in\mathcal{L}'_\sigma(\mathbb{R}_{+},X)$.

For $\mu\geq -1$, we write $F(\omega)=(-\mathrm{i}\omega)^m F_m(\omega)$ with the integer $m>\mu+1$. Then we obtain
$$|F_m(\omega)|\leq C_F(\sigma)|\omega|^{\mu-m}$$
with $\mu-m<-1$. Then there exists a causal function $f_m(t)$ with polynomial growth such that $f_m(t)=\mathcal{L}^{-1}[F_m](t)$. On account of $\mathcal{L}[f_m^{(m)}](\omega)=(-\mathrm{i}\omega)^m \mathcal{L}[f_m](\omega)$, the inverse Fourier-Laplace transform of $F(\omega)$ is $f(t)=f_m^{(m)}(t)$. Then we also have $f(t)\in\mathcal{L}'_\sigma(\mathbb{R}_{+},X)$ for this case. For more details, see the analysis of Sayas \cite{sayas2011retarded} and Lubich \cite{lubich1994multistep}.

From the above analysis, there is always an inverse Fourier-Laplace transform of $F(\omega)$ with or without the strong inversion formula \eqref{331}. For simplification, assume that the strong inversion formula can be used in this paper. Back to time domain, we can get the following definition of the boundary operator and the TBC.
\begin{Definition}
The boundary operator $\mathcal{T}: \mathcal{L}'_\sigma(\mathbb{R},\tilde{H}^{1/2}(\Gamma))\rightarrow \mathcal{L}'_\sigma(\mathbb{R},H^{-1/2}(\Gamma))$ is defined as
\begin{equation*}%\label{eq:TBC_operator}
\mathcal{T}v:=\frac{1}{4\pi^2}\int_{-\infty+\mathrm{i}\sigma}^{\infty+\mathrm{i}\sigma}\int_{-\infty}^\infty \mathrm{e}^{-\mathrm{i}\omega t+i\xi_1x_1} \mathrm{i}\omega\sqrt{1-\xi_1^2/\omega^2}\mathcal{L}\circ\mathcal{F}_{x_1}[E_0(v)](\omega,\xi_1,0) \,\mathrm{d}\xi_1\mathrm{d}\omega.
\end{equation*}
\end{Definition}

\begin{Definition}
The transparent boundary condition (TBC) of the time domain scattering problem \eqref{301}--\eqref{303} is defined as
\begin{equation*}%\label{315}
\frac{\partial u}{\partial \nu}\bigg|_{\Gamma}=\mathcal{T}u+g.
\end{equation*}
where $\mathcal{T}$ is the boundary operator and
\begin{equation*}
g:=\frac{\partial (u^i+u^\rho)}{\partial \nu}\bigg|_{x_2=0}=2\frac{\partial u^i}{\partial \nu}\bigg|_{x_2=0}\quad \text{on} \ \mathbb{R}\times\Gamma.
\end{equation*}
\end{Definition}

Then we get a new time domain scattering problem: Find $u$ such that
\begin{align}
\label{318}u_{tt}-\Delta u=0  &\quad \text{in}\;\;\mathbb{R}\times D,\\
\label{319}u=0 &\quad \text{on}\;\;\mathbb{R}\times \Lambda,\\
\label{320}\frac{\partial u}{\partial \nu}=\mathcal{T}u+g &\quad \text{on}\;\;\mathbb{R}\times \Gamma,\\
\label{321}u(0,\cdot)=u_t(0,\cdot)=0  &\quad \text{in}\;\; D.
\end{align}

%=======================================================

\section{Well-posedness}

In this section, we concern about well-posedness of the scattering problem \eqref{318}--\eqref{321} and the equivalence between the scattering problems \eqref{301}--\eqref{303} and \eqref{318}--\eqref{321}. Instead of the classic equations, the corresponding generalized equations are considered.

For the problem \eqref{301}--\eqref{303}, the set of solutions we care about are $u\in \mathcal{L}'_\sigma(\mathbb{R}_{+},H_{\Delta,E}^1(D\cup\mathbb{R}^2_+))$ such that
\begin{equation}
\label{353}\ddot{u}-\Delta u=f  \quad \text{in}\;\;\mathbb{R}\times (D\cup\mathbb{R}_+^2),
\end{equation}
where $\ddot{u}$ is the generalized second order derivative of $u$ with respect to $t$.

Similarly, what we care about the problem \eqref{318}--\eqref{321} is: find $u\in \mathcal{L}'_\sigma(\mathbb{R}_{+},H_{\Delta,E}^1(D))$ such that
\begin{align}
\label{354}\ddot{u}-\triangle u=0  &\quad \text{in}\;\;\mathbb{R}\times D,\\
\label{355}\frac{\partial u}{\partial \nu}=\mathcal{T}u+g &\quad \text{on}\;\;\mathbb{R}\times \Gamma.
\end{align}

Let $u\in \mathcal{L}'_\sigma(\mathbb{R}_{+},H_{\Delta,E}^1(D))$ be the solution of the problem \eqref{354}--\eqref{355}. The Fourier-Laplace transform implies that $U(\omega,\cdot)\in H_{\Delta,E}^1(D)$ satisfies
\begin{align}
\label{351}\triangle U(\omega,\cdot)+\omega^2 U(\omega,\cdot)=0  &\quad \text{in}\;\;D,\\
\label{352}\frac{\partial U(\omega,\cdot)}{\partial \nu}=\mathcal{T}^F U(\omega,\cdot)+G(\omega,\cdot) &\quad \text{on}\;\; \Gamma.
\end{align}

We first give the following property of the operator $\mathcal{T}^F$.

\begin{Lemma}\label{prop311}
Let $\omega\in\mathbb{C}$. The operator $\mathcal{T}^F :\tilde{H}^{1/2}(\Gamma)\rightarrow H^{-1/2}(\Gamma)$ satisfies
\begin{equation*}
\mathrm{Im}\left(\overline{\omega}\left\langle\mathcal{T}^F V(\omega,\cdot),\overline{V(\omega,\cdot) }\right\rangle_{\Gamma}\right) \geq 0.
\end{equation*}
\end{Lemma}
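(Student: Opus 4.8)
The plan is to transfer everything to the Fourier side in $x_1$, where $\mathcal{T}^F$ becomes multiplication by $\mathrm{i}\omega\sqrt{1-\xi_1^2/\omega^2}$, and then estimate the sign of the imaginary part pointwise in $\xi_1$. First I would use the Plancherel/Parseval identity on $\mathbb{R}^2_0$ to write
\begin{equation*}
\left\langle\mathcal{T}^F V,\overline{V}\right\rangle_{\Gamma}
=\left\langle\mathcal{T}^F V,\overline{E_0(V)}\right\rangle_{\mathbb{R}^2_0}
=\frac{1}{2\pi}\int_{\mathbb{R}}\mathrm{i}\omega\sqrt{1-\xi_1^2/\omega^2}\,
\bigl|\mathcal{F}_{x_1}[E_0(V)](\omega,\xi_1,0)\bigr|^2\,\mathrm{d}\xi_1,
\end{equation*}
using that $V|_\Lambda=0$ and $V|_\Gamma\in\tilde{H}^{1/2}(\Gamma)$ so that $E_0(V)\in H^{1/2}(\mathbb{R}^2_0)$ makes the pairing well-defined. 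Multiplying by $\overline{\omega}$ and taking the imaginary part, the nonnegative weight $|\mathcal{F}_{x_1}[E_0(V)]|^2$ factors out, so it suffices to show $\mathrm{Im}\bigl(\mathrm{i}|\omega|^2\sqrt{1-\xi_1^2/\omega^2}\bigr)\ge 0$, i.e. $\mathrm{Re}\bigl(\sqrt{1-\xi_1^2/\omega^2}\bigr)\ge 0$, for every real $\xi_1$.

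That last inequality is immediate from the definition of the principal square root, which the excerpt fixes by $\mathrm{Re}(\sqrt{a})\ge 0$. So the substantive content is really the algebraic identity $\mathrm{Im}\bigl(\overline{\omega}\cdot\mathrm{i}\omega z\bigr)=\mathrm{Im}\bigl(\mathrm{i}|\omega|^2 z\bigr)=|\omega|^2\mathrm{Re}(z)$ for $z=\sqrt{1-\xi_1^2/\omega^2}$, combined with the pointwise positivity of the Fourier weight. I would present the computation for general $\omega\in\mathbb{C}$ (the statement only requires $\omega\in\mathbb{C}$, not $\omega\in\mathbb{C}_+$), noting that the branch-cut discussion about $\mathrm{Im}(1-\xi_1^2/\omega^2)$ that appeared in the derivation of the TBC is not needed here: only $\mathrm{Re}$ of the square root matters, and that is $\ge 0$ by fiat. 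For $\omega$ real with $|\xi_1|>|\omega|$ the radicand is negative and the principal root is purely imaginary, so $\mathrm{Re}(z)=0$ and the inequality degenerates to equality — worth a remark but not an obstruction.

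The main obstacle, such as it is, is purely a matter of justifying the Parseval step rigorously in the correct duality: $\mathcal{T}^F V$ lives in $H^{-1/2}(\Gamma)$ while $V$ is only in $\tilde H^{1/2}(\Gamma)$, so the bracket $\langle\cdot,\cdot\rangle_\Gamma$ must be read as the $H^{-1/2}(\Gamma)$–$\tilde H^{1/2}(\Gamma)$ duality pairing (extending the $L^2(\Gamma)$ inner product), and the passage to $\mathbb{R}^2_0$ via $E_0$ together with the Fourier representation has to respect that. I would handle this by first assuming $V$ smooth with compact support in $\Gamma$, where every integral converges absolutely and the Parseval identity is classical, establishing the inequality there, and then extending to all of $\tilde H^{1/2}(\Gamma)$ by density together with the continuity of $\mathcal{T}^F:\tilde H^{1/2}(\Gamma)\to H^{-1/2}(\Gamma)$ (which follows from the symbol bound $|\mathrm{i}\omega\sqrt{1-\xi_1^2/\omega^2}|\lesssim_\omega (1+|\xi_1|)$). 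Everything after that reduces to the two lines of the sign computation above.
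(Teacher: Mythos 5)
Your proposal is correct and follows essentially the same route as the paper: reduce the pairing via Parseval to $\frac{\mathrm{i}\omega}{2\pi}\int_{\mathbb{R}}\sqrt{1-\xi_1^2/\omega^2}\,|\mathcal{F}_{x_1}[E_0(V)]|^2\,\mathrm{d}\xi_1$ for smooth compactly supported $V$, extend by density of $C_0^\infty(\Gamma)$ in $\tilde H^{1/2}(\Gamma)$, and conclude from $\mathrm{Im}\bigl(\overline{\omega}\,\mathrm{i}\omega z\bigr)=|\omega|^2\mathrm{Re}(z)\ge 0$ for the principal square root. Your explicit remarks on the duality pairing, the continuity of $\mathcal{T}^F$ in the density step, and the degenerate real-$\omega$ case are slightly more careful than the paper's proof but do not change the argument.
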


\begin{proof}
For $V(\omega,\cdot)\in C_0^{\infty}(\Gamma)$, we have
\begin{align*}
&\left\langle\mathcal{T}^F V(\omega,\cdot),\overline{V(\omega,\cdot)}\right\rangle_{\Gamma}\\
=&\frac{\mathrm{i}\omega}{2\pi} \int_{\mathbb{R}}\sqrt{1-\xi_1^2/\omega^2}\mathcal{F}_{x_1}[E_0(V(\omega,\cdot))] \int_{\Gamma}\mathrm{e}^{\mathrm{i}\xi_1x_1}\overline{V(\omega,\cdot)}\mathrm{d}s_x\mathrm{d}\xi_1\\
=&\frac{\mathrm{i}\omega}{2\pi}\int_{\mathbb{R}}\sqrt{1-\xi_1^2/\omega^2}\mathcal{F}_{x_1}[E_0(V(\omega,\cdot) )] \overline{\int_{\mathbb{R}}\mathrm{e}^{-\mathrm{i}\xi_1x_1}E_0(V(\omega,\cdot) )\mathrm{d}x_1}\mathrm{d}\xi_1\\
=&\frac{\mathrm{i}\omega}{2\pi}\int_{\mathbb{R}}\sqrt{1-\xi_1^2/\omega^2}\left|\mathcal{F}_{x_1}[E_0(V(\omega,\cdot) )]\right|^2 \mathrm{d}\xi_1.
\end{align*}
Note that $V(\omega,\cdot)\in C_0^{\infty}(\Gamma)$ is dense in $\tilde{H}^{1/2}(\Gamma)$. Therefore, for $V(\omega,\cdot)\in\tilde{H}^{1/2}(\Gamma)$, we also have
$$\left\langle\mathcal{T}^F V(\omega,\cdot),\overline{V(\omega,\cdot)}\right\rangle_{\Gamma}= \frac{\mathrm{i}\omega}{2\pi}\int_{\mathbb{R}}\sqrt{1-\xi_1^2/\omega^2} \left|\mathcal{F}_{x_1}[E_0(V(\omega,\cdot))]\right|^2 \mathrm{d}\xi_1.$$
Thus
\begin{align*}
\mathrm{Im}\left(\overline{\omega}\left\langle\mathcal{T}^F V(\omega,\cdot),\overline{V(\omega,\cdot) }\right\rangle_{\Gamma}\right) =&\mathrm{Re}\left(\frac{|\omega|^2}{2\pi}\int_{\mathbb{R}}\sqrt{1-\xi_1^2/\omega^2}| \mathcal{F}_{x_1}[E_0(V(\omega,\cdot))]|^2 \mathrm{d}\xi_1\right)\\
\geq &0.
\end{align*}
This completes the proof.
%\qed
\end{proof}

Moreover, we need the following property of the norms given by \cite{sayas2011retarded}.
Let $\omega\in\mathbb{C}$ and $\mathrm{Im}(\omega)=\sigma>0$. Then the norms $\normmm{\cdot}_{|\omega|,D}$ and $\|\cdot\|_{H^1(D)}$ satisfy
\begin{equation}\label{360}
\underline{\sigma}\|V\|_{H^1(D)}\leq\normmm{V}_{|\omega|,D}\leq \frac{|\omega|}{\underline{\sigma}}\|V\|_{H^1(D)}.
\end{equation}

Then we give the following theorem about the well-posedness of the problem \eqref{351}--\eqref{352}.

\begin{Proposition}\label{prop313}
Let $\omega\in\mathbb{C}$, $\mathrm{Im}(\omega)=\sigma>0$ and $G(\omega,\cdot)\in H^{-1/2}(\Gamma)$. There exists a unique solution $U(\omega,\cdot)\in H_{\Delta,E}^1(D)$ of the problem \eqref{351}--\eqref{352}. Moreover, there exists a constant $C_{\sigma,D}$ depending only on $\sigma$ and $D$ such that
$$\|U(\omega,\cdot)\|_{H^{1}_{\Delta,E}(D)}\leq C_{\sigma,D}|\omega|^2\|G(\omega,\cdot)\|_{H^{-1/2}(\Gamma)}.$$
\end{Proposition}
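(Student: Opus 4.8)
The plan is to establish existence and uniqueness for the Helmholtz-type problem \eqref{351}--\eqref{352} via the Lax--Milgram lemma applied to the weak formulation, and then extract the quantitative bound by carefully tracking constants. First I would derive the variational formulation: multiply \eqref{351} by a test function $\overline{W}$ with $W\in H_E^1(D)$, integrate over $D$, integrate by parts using Green's identity, and substitute the boundary condition \eqref{352} on $\Gamma$ (noting $U=0$ on $\Lambda$ so that part of the boundary term vanishes). This yields the sesquilinear form
\begin{equation*}
a(U,W):=(\nabla U,\nabla W)_D-\omega^2(U,W)_D-\langle\mathcal{T}^F U,\overline{W}\rangle_\Gamma,
\end{equation*}
with right-hand side $-\langle G,\overline{W}\rangle_\Gamma$, and one seeks $U\in H_E^1(D)$ satisfying $a(U,W)=-\langle G,\overline{W}\rangle_\Gamma$ for all $W\in H_E^1(D)$.

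The coercivity is the heart of the argument, and the multiplier that makes it work is $\overline{\omega}$ (or equivalently testing against $\overline{\omega}\,\overline{U}$ rather than $\overline{U}$). Taking $W=U$ and looking at $\overline{\omega}\,a(U,U)$, the first two terms give $\overline{\omega}\|\nabla U\|_D^2-\overline{\omega}\omega^2\|U\|_D^2$; taking the imaginary part, and using $\mathrm{Im}(\overline\omega)=-\sigma<0$ together with $\mathrm{Im}(\overline\omega|\omega|^2)=-\sigma|\omega|^2$... more precisely one computes $\mathrm{Im}(\overline\omega\cdot 1)=-\sigma$ and $\mathrm{Im}(-\overline\omega\omega^2)=\mathrm{Im}(-|\omega|^2\omega)=-\sigma|\omega|^2$, so
\begin{equation*}
-\mathrm{Im}\big(\overline\omega\, a(U,U)\big)=\sigma\|\nabla U\|_D^2+\sigma|\omega|^2\|U\|_D^2+\mathrm{Im}\big(\overline\omega\langle\mathcal{T}^F U,\overline U\rangle_\Gamma\big)\geq\sigma\normmm{U}_{|\omega|,D}^2,
\end{equation*}
where the crucial sign of the boundary term is supplied by Lemma~\ref{prop311}. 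Thus $a$ is coercive on $H_E^1(D)$ with coercivity constant $\sigma/|\omega|$ relative to $\normmm{\cdot}_{|\omega|,D}$ (after dividing by $|\omega|$ to account for the $\overline\omega$ multiplier), and combined with \eqref{360} this gives coercivity in the $H^1(D)$-norm. Boundedness of $a$ follows from continuity of $\mathcal{T}^F$ and of the trace operator $H^1(D)\to\tilde H^{1/2}(\Gamma)$; boundedness of the antilinear functional $W\mapsto\langle G,\overline W\rangle_\Gamma$ follows from $G\in H^{-1/2}(\Gamma)$ and the trace inequality. Lax--Milgram then yields a unique $U\in H_E^1(D)$, and reading \eqref{351} as $\Delta U=-\omega^2 U\in L^2(D)$ shows $U\in H_{\Delta,E}^1(D)$, so the boundary data in \eqref{352} makes sense and the reduced problem is solved.

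For the stability estimate I would chain the inequalities from the energy identity. From the coercivity computation, $\sigma\normmm{U}_{|\omega|,D}^2\le|\overline\omega\,a(U,U)|=|\omega||\langle G,\overline U\rangle_\Gamma|\le|\omega|\,\|G\|_{H^{-1/2}(\Gamma)}\|U|_\Gamma\|_{\tilde H^{1/2}(\Gamma)}\le C|\omega|\,\|G\|_{H^{-1/2}(\Gamma)}\|U\|_{H^1(D)}$, and then inserting $\|U\|_{H^1(D)}\le\underline\sigma^{-1}\normmm{U}_{|\omega|,D}$ from \eqref{360} on the right and $\underline\sigma\|U\|_{H^1(D)}\le\normmm{U}_{|\omega|,D}$ on the left, one cancels one power of $\normmm{U}_{|\omega|,D}$ and obtains $\|U\|_{H^1(D)}\le C\sigma^{-1}\underline\sigma^{-2}|\omega|\,\|G\|_{H^{-1/2}(\Gamma)}$. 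Finally, to pass to the full $H_{\Delta,E}^1(D)$-norm one adds $\|\Delta U\|_D=|\omega|^2\|U\|_D\le|\omega|^2\|U\|_{H^1(D)}$, which produces the extra factor $|\omega|^2$ and hence the stated bound with $C_{\sigma,D}$ absorbing all constants that depend only on $\sigma$ (through $\underline\sigma$ and $\sigma^{-1}$) and on $D$ (through the trace constant).

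The main obstacle I anticipate is not the abstract Lax--Milgram machinery but the bookkeeping of the $\omega$-dependence: one must resist testing against $\overline U$ (which gives no usable sign because of the indefinite $-\omega^2\|U\|_D^2$ term) and instead use the $\overline\omega$-weighted identity, and then be careful that the coercivity constant genuinely is $\sigma/|\omega|$ and not something worse, since this is exactly what controls whether the final power of $|\omega|$ is $2$ or larger. A secondary technical point is justifying that $\mathcal{T}^F$ is well-defined and continuous from $\tilde H^{1/2}(\Gamma)$ to $H^{-1/2}(\Gamma)$ with a norm that is at worst polynomially bounded in $|\omega|$ and in $1/\sigma$ --- the symbol $\sqrt{1-\xi_1^2/\omega^2}$ grows like $|\xi_1|/|\omega|$ for large $|\xi_1|$, which is precisely what makes it map $H^{1/2}$ into $H^{-1/2}$, but one should check the constant does not blow up badly as $\sigma\to0$; since the final estimate only claims a constant $C_{\sigma,D}$ depending on $\sigma$, this is harmless, but it is the place where a careless argument could hide a gap.
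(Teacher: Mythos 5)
Your proposal follows the paper's proof essentially verbatim: the same variational formulation, the same $\overline\omega$-weighted imaginary-part coercivity obtained from Lemma \ref{prop311} and \eqref{360}, a Lax--Milgram argument for unique solvability in $H_E^1(D)$, the same chain of trace inequalities for the stability bound, and finally $\Delta U=-\omega^2 U$ to upgrade to $H_{\Delta,E}^1(D)$ and produce the extra factor $|\omega|^2$. The only slip is the sign of the right-hand side in the weak formulation (Green's identity with the condition $\partial_\nu U=\mathcal{T}^F U+G$ gives $+\langle G,\overline W\rangle_\Gamma$, as in the paper), which is immaterial since your estimates only use $|\langle G,\overline U\rangle_\Gamma|$.
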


\begin{proof}
Consider the variational formulation associate with the problem \eqref{351}--\eqref{352}: $U(\omega,\cdot)\in H_E^1(D)$ solves \eqref{351}--\eqref{352} if and only if
\begin{align*}
A\left(U(\omega,\cdot),\overline{V(\omega,\cdot)}\right)
:=&\left(\nabla U(\omega,\cdot),\nabla \overline{V(\omega,\cdot)}\right)_{D}-\omega^2\left(U(\omega,\cdot),\overline{V(\omega,\cdot)}\right)_{D} -\left\langle\mathcal{T}^F(\omega)U(\omega,\cdot), \overline{V(\omega,\cdot)}\right\rangle_{\Gamma}\\
=&\left\langle G(\omega,\cdot),\overline{V(\omega,\cdot)}\right\rangle_{\Gamma},\quad \forall V(\omega,\cdot)\in  H_E^1(D).
\end{align*}
It follows from Lemma \ref{prop311} and \eqref{360} that
\begin{align*}
\mathrm{Im}\left(-\overline{\omega} A\left(U(\omega,\cdot),\overline{U(\omega,\cdot)}\right)\right)
=& \mathrm{Im}\Big(-\overline{\omega}\left(\nabla U(\omega,\cdot),\nabla \overline{U(\omega,\cdot)}\right)_{D} +\overline{\omega}\omega^2\left(U(\omega,\cdot),\overline{U(\omega,\cdot)}\right)_{D} \\
&+\overline{\omega}\left\langle\mathcal{T}^F(\omega)U(\omega,\cdot),\overline{U(\omega,\cdot)}\right\rangle_{\Gamma}\Big)\\
\geq &\sigma\|\nabla U(\omega,\cdot)\|_{D}^2+\sigma|\omega|^2\|U(\omega,\cdot)\|_{D}^2\\
= & \sigma\normmm{U(\omega,\cdot)}_{|\omega|,D}^2.
\end{align*}
Then the unique solvability of the problem \eqref{351}--\eqref{352} in $H^1_E(D)$ follows from a Lax-Milgram argument. Moreover, it follows from the trace theorem and \eqref{360} that
\begin{align*}
\normmm{U(\omega,\cdot)}_{|\omega|,D}^2\leq &\frac{1}{\sigma}\mathrm{Im}\left(-\overline{\omega} \left\langle G(\omega,\cdot),\overline{U(\omega,\cdot)}\right\rangle_{\Gamma}\right) \\
\leq & \frac{|\omega|}{\sigma}\|G(\omega,\cdot)\|_{H^{-1/2}(\Gamma)} \|U(\omega,\cdot)\|_{\tilde{H}^{1/2}(\Gamma)}\\
\leq & C_{D}\frac{|\omega|}{\sigma}\|G(\omega,\cdot)\|_{H^{-1/2}(\Gamma)} \|U(\omega,\cdot)\|_{H_E^1(D)}\\
\leq & C_{D}\frac{|\omega|}{\sigma\underline{\sigma}}\|G(\omega,\cdot)\|_{H^{-1/2}(\Gamma)} \normmm{U(\omega,\cdot)}_{|\omega|,D},
\end{align*}
where $C_D$ is a constant depending only on the domain $D$. Thus
$$\normmm{U(\omega,\cdot)}_{|\omega|,D}^2\leq C_{D}\frac{|\omega|}{\sigma\underline{\sigma}}\|G(\omega,\cdot)\|_{H^{-1/2}(\Gamma)}.$$
Then \eqref{360} implies
$$\|U(\omega,\cdot)\|_{H_E^1(D)}^2\leq C_{D}\frac{|\omega|}{\sigma\underline{\sigma}^2}\|G(\omega,\cdot)\|_{H^{-1/2}(\Gamma)}.$$

We have proved that there exists a unique solution $U(\omega,\cdot)\in H_{E}^1(D)$ of the problem \eqref{351}--\eqref{352}. Then \eqref{351} implies that $\Delta U(\omega,\cdot)=-\omega^2 U(\omega,\cdot)\in H^1_E(D)\subset L^2(D)$. Thus $U(\omega,\cdot)\in H_{\Delta,E}^1(D)$, which means there exists a unique solution $U(\omega,\cdot)\in H_{\Delta,E}^1(D)$ of the problem \eqref{351}--\eqref{352}. Moreover, the definition of the norm $\normmm{\cdot}_{c,\omega}$ implies
\begin{align*}
\|\Delta U(\omega,\cdot)\|_D=&\|\omega^2 U(\omega,\cdot)\|_D \leq |\omega|^2\|U(\omega,\cdot)\|_D\\
\leq& |\omega|\normmm{U(\omega,\cdot)}_{|\omega|,D} \leq C_D \frac{|\omega|^2}{\sigma\underline{\sigma}}\|G(\omega,\cdot)\|_{H^{-1/2}(\Gamma)}.
\end{align*}
Then we have
\begin{align*}
\|U(\omega,\cdot)\|_{H_{\Delta,E}^1(D)}^2=&\|U(\omega,\cdot)\|_{H_{E}^1(D)}^2+\|\Delta U(\omega,\cdot)\|_{D}^2 \\
\leq& C_D^2 \frac{|\omega|^2}{\sigma^2\underline{\sigma}^2} \left(\frac{1}{\underline{\sigma}^2}+|\omega|^2\right) \|G(\omega,\cdot)\|_{H^{-1/2}(\Gamma)}^2\\
\leq& 2C_D^2 \frac{|\omega|^4}{\sigma^2\underline{\sigma}^6} \|G(\omega,\cdot)\|_{H^{-1/2}(\Gamma)}^2.
\end{align*}
Thus
$$\|U(\omega,\cdot)\|_{H^{1}_{\Delta,E}(D)}\leq C_{\sigma,D}|\omega|^2\|G(\omega,\cdot)\|_{H^{-1/2}(\Gamma)},$$
where $C_{\sigma,D}=\sqrt{2} C_D/(\sigma\underline{\sigma}^3)$. This completes the proof.
\end{proof}

Then we consider the time domain scattering problem \eqref{354}--\eqref{355}. We need the following lemma for the time domain analysis.

\begin{Lemma}~\textup{(\cite{lubich1994multistep})}\label{prop315}
Let $p,r\in\mathbb{R}$, $\omega\in\mathbb{C}$ and $\mathrm{Im}(\omega)>\sigma_0>0$. $F(\omega)\in\mathcal{B}(X,Y)$ is a bounded operator between the Hilbert spaces  $X$ and $Y$. Define $f(t):=\mathcal{L}^{-1}[F(\omega)](t)$ and $F_t g:=\int_{-\infty}^{\infty}f(t)g(\cdot-t)\mathrm{d}t$. Assume that
$$\|F(\omega)\|_{\mathcal{B}(X,Y)}\leq C|\omega|^r,\quad \mathrm{Im}(\omega)>\sigma_0.$$
Then, for $\sigma>\sigma_0$, $F_t$ is a bounded operator from $H_{\sigma}^{p+r}(\mathbb{R}_+,X)$ to $H_{\sigma}^{p}(\mathbb{R}_+,Y)$.
\end{Lemma}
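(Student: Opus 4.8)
The plan is to pass to the ``Laplace domain'' and use that, by the very definition \eqref{001} of the norm, the Fourier--Laplace transform identifies $H_\sigma^q(\mathbb{R}_+,Z)$ with the space of $Z$-valued functions on the line $\{\mathrm{Im}\,\omega=\sigma\}$ that are square-integrable against the weight $|\omega|^{2q}$, and that the time convolution $F_t g=f\ast g$ is turned into the pointwise product $\omega\mapsto F(\omega)\mathcal{L}[g](\omega)$ via the convolution theorem. Granting these two facts, the assertion collapses to a one-line weighted $L^2$ estimate. To keep the convolution theorem on safe ground I would first establish the inequality for $g$ in a dense subclass of $H_\sigma^{p+r}(\mathbb{R}_+,X)$ --- for instance those $g$ for which $\mathcal{L}[g](\cdot+\mathrm{i}\sigma)$ is smooth with rapid decay --- and then invoke density, the bound being uniform.

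The computation itself is immediate. For $\mathrm{Im}\,\omega=\sigma>\sigma_0$ the operator bound gives $\|F(\omega)\mathcal{L}[g](\omega)\|_Y\leq\|F(\omega)\|_{\mathcal{B}(X,Y)}\|\mathcal{L}[g](\omega)\|_X\leq C|\omega|^r\|\mathcal{L}[g](\omega)\|_X$, whence, using $\mathcal{L}[F_t g]=F\,\mathcal{L}[g]$ together with the definition of the norms,
\begin{align*}
\|F_t g\|_{H_\sigma^{p}(\mathbb{R}_+,Y)}^2
&=\int_{-\infty+\mathrm{i}\sigma}^{\infty+\mathrm{i}\sigma}|\omega|^{2p}\,\|F(\omega)\mathcal{L}[g](\omega)\|_Y^2\,\mathrm{d}\omega\\
&\leq C^2\int_{-\infty+\mathrm{i}\sigma}^{\infty+\mathrm{i}\sigma}|\omega|^{2(p+r)}\,\|\mathcal{L}[g](\omega)\|_X^2\,\mathrm{d}\omega
=C^2\,\|g\|_{H_\sigma^{p+r}(\mathbb{R}_+,X)}^2 ,
\end{align*}
that is, $\|F_t\|_{\mathcal{B}(H_\sigma^{p+r}(\mathbb{R}_+,X),\,H_\sigma^{p}(\mathbb{R}_+,Y))}\leq C$.

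The only genuinely delicate point --- and the one I would spend the most care on --- is to make sense of $F_t g$ and of the identity $\mathcal{L}[F_t g]=F\,\mathcal{L}[g]$ when $F(\omega)$ does not decay, so that $f=\mathcal{L}^{-1}[F]$ is a distribution rather than an ordinary integrable function, and to verify that $F_t g$ is again causal so that it indeed lies in $H_\sigma^p(\mathbb{R}_+,Y)$. Causality of $f$ follows from the contour-shift argument already used in Section 3 (letting $\sigma\to+\infty$ in the inversion formula), reducing if necessary to the case $r<-1$ via the factorization $F(\omega)=(-\mathrm{i}\omega)^m F_m(\omega)$ with an integer $m>r+1$: then $f_m:=\mathcal{L}^{-1}[F_m]$ is a causal function of polynomial growth and $f=f_m^{(m)}$, so that $F_t g=f_m\ast g^{(m)}$ involves only ordinary functions and the displayed estimate is reproduced verbatim, the factors $|\omega|^{\pm 2m}$ coming from $g^{(m)}$ and from the improved decay of $F_m$ cancelling. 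Since $f$ and $g$ are both supported in $[0,\infty)$, their convolution is well defined as an element of $\mathcal{L}'_\sigma(\mathbb{R}_{+},Y)$ and the exchange formula holds on $\{\mathrm{Im}\,\omega=\sigma\}$; the details of these distributional manipulations are precisely those of \cite{lubich1994multistep} and \cite{sayas2011retarded}, which is why the statement is quoted from there.
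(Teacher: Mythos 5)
The paper does not prove this lemma at all---it is quoted verbatim from \cite{lubich1994multistep}---so there is no internal proof to compare against; your argument is the standard one from that reference (and from \cite{sayas2011retarded}): identify $H_\sigma^q(\mathbb{R}_+,\cdot)$ with a weighted $L^2$ space on the line $\mathrm{Im}\,\omega=\sigma$ via the definition \eqref{001}, use $\mathcal{L}[F_tg]=F\,\mathcal{L}[g]$, and absorb the factor $C|\omega|^r$ into the weight, with the distributional/causality issues handled by the factorization $F=(-\mathrm{i}\omega)^mF_m$ exactly as in Section 3.2. This is correct; the only tacit ingredient you should state explicitly is that $F(\omega)$ is an \emph{analytic} $\mathcal{B}(X,Y)$-valued family on $\mathrm{Im}\,\omega>\sigma_0$ (implicit in the lemma and in the paper's use of it), since both the contour-shift causality argument and the exchange formula $\mathcal{L}[f\ast g]=F\,\mathcal{L}[g]$ rely on it.
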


We have the following results in time domain.

\begin{Theorem}\label{prop314}
Let $\sigma>\sigma_0>0$, $p\in\mathbb{R}$ and $g\in H_{\sigma}^{p+2}(\mathbb{R}_+, H^{-1/2}(\Gamma))$. Then there exists a unique solution $u\in H_{\sigma}^{p}(\mathbb{R}_+,H_{\Delta,E}^{1}(D))$ of the scattering problem \eqref{354}--\eqref{355}. Moreover, there exists a constant $C'_{\sigma_0,D}$ depending only on $\sigma_0$ and $D$ such that
$$\|u\|_{H_{\sigma}^{p}(\mathbb{R}_+,H_{\Delta,E}^{1}(D))}\leq C'_{\sigma_0,D}\|g\|_{H_{\sigma_0}^{p+2}(\mathbb{R}_+, H^{-1/2}(\Gamma))}.$$
\end{Theorem}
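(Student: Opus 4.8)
## Proof proposal

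The plan is to transfer the $\omega$-uniform bound of Proposition~\ref{prop313} to the time domain via the Fourier--Laplace transform, exactly as in the abstract machinery of Lemma~\ref{prop315}. First I would take the Fourier--Laplace transform of the time domain problem \eqref{354}--\eqref{355} in $t$; by the remarks preceding \eqref{351}--\eqref{352}, for each fixed $\omega$ with $\mathrm{Im}(\omega)=\sigma>\sigma_0$ the transformed datum $G(\omega,\cdot)=\mathcal{L}[g](\omega,\cdot)$ lies in $H^{-1/2}(\Gamma)$ and Proposition~\ref{prop313} produces the unique solution $U(\omega,\cdot)\in H^1_{\Delta,E}(D)$ of \eqref{351}--\eqref{352} with
$$\|U(\omega,\cdot)\|_{H^1_{\Delta,E}(D)}\leq C_{\sigma,D}|\omega|^2\|G(\omega,\cdot)\|_{H^{-1/2}(\Gamma)}.$$
The key point for a clean statement is that, since $\sigma>\sigma_0>0$ with $\sigma_0$ fixed and $\underline{\sigma_0}\le\underline{\sigma}$, the constant $C_{\sigma,D}=\sqrt{2}\,C_D/(\sigma\underline{\sigma}^3)$ is bounded by a constant $C'_{\sigma_0,D}$ depending only on $\sigma_0$ and $D$ (it is decreasing in $\sigma$); thus the solution operator $G\mapsto U$ is a bounded operator from $H^{-1/2}(\Gamma)$ to $H^1_{\Delta,E}(D)$ with operator norm $\leq C'_{\sigma_0,D}|\omega|^2$, uniformly for $\mathrm{Im}(\omega)>\sigma_0$.

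Next I would invoke Lemma~\ref{prop315} with $X=H^{-1/2}(\Gamma)$, $Y=H^1_{\Delta,E}(D)$, the operator-valued symbol $F(\omega)$ equal to the solution operator of \eqref{351}--\eqref{352}, growth exponent $r=2$, and the shift parameter $p$ of the theorem. This immediately yields that the convolution operator $g\mapsto u:=F_t g$ maps $H^{p+2}_{\sigma}(\mathbb{R}_+,H^{-1/2}(\Gamma))$ boundedly into $H^p_{\sigma}(\mathbb{R}_+,H^1_{\Delta,E}(D))$ with the estimate
$$\|u\|_{H^p_\sigma(\mathbb{R}_+,H^1_{\Delta,E}(D))}\leq C'_{\sigma_0,D}\,\|g\|_{H^{p+2}_{\sigma_0}(\mathbb{R}_+,H^{-1/2}(\Gamma))},$$
where I have also used the monotonicity of the $H^q_\sigma$-norms in $\sigma$ to replace $\sigma$ by $\sigma_0$ on the right-hand side. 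The resulting $u$ is causal because it is a convolution of the causal $g$ with the causal inverse transform $f=\mathcal{L}^{-1}[F]$ (causality of $f$ follows from the analyticity of $F(\omega)$ in $\mathbb{C}_+$ together with its polynomial bound, via the contour-shift argument recalled in Section~3.2), so $u\in H^p_\sigma(\mathbb{R}_+,H^1_{\Delta,E}(D))$.

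It remains to check that this $u$ is genuinely a solution of \eqref{354}--\eqref{355} and that it is the unique such solution. For the first part I would argue that, by construction, $\mathcal{L}[u](\omega,\cdot)=F(\omega)G(\omega,\cdot)=U(\omega,\cdot)$ solves \eqref{351}--\eqref{352} for a.e.\ $\omega$ on the line $\mathrm{Im}(\omega)=\sigma$; applying $\mathcal{L}^{-1}$ and using that $\mathcal{L}$ intertwines $\partial_t^2$ with multiplication by $-\omega^2$ and commutes with $\Delta$ and with the boundary operators (by the very definitions of $\mathcal{T}$ and $\mathcal{T}^F$ and of $g$, $G$), recovers \eqref{354}--\eqref{355} in the sense of $\mathcal{L}'_\sigma(\mathbb{R}_+,\cdot)$-valued distributions; the initial conditions are automatic from causality. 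For uniqueness, if $u_1,u_2$ are two solutions in $H^p_\sigma(\mathbb{R}_+,H^1_{\Delta,E}(D))$, their difference transforms to a function solving the homogeneous problem \eqref{351}--\eqref{352} with $G\equiv 0$, which by the uniqueness part of Proposition~\ref{prop313} vanishes for every $\omega$; injectivity of $\mathcal{L}$ then forces $u_1=u_2$. I expect the main obstacle to be purely bookkeeping rather than conceptual: one must verify carefully that the formal Fourier--Laplace manipulations are legitimate in the distributional setting $\mathcal{L}'_\sigma$ (so that "taking $\mathcal{L}$" and "taking $\mathcal{L}^{-1}$" are honest inverse bijections on the relevant spaces, and the symbol $F(\omega)$ is measurable and polynomially bounded as an operator-valued function), and that the boundary operator identity $\mathcal{L}[\mathcal{T}u]=\mathcal{T}^F\mathcal{L}[u]$ holds on $H^{1/2}(\Gamma)$-valued functions; once these are in place the estimate is a direct application of Lemma~\ref{prop315}.
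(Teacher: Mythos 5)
Your proposal is correct and follows essentially the same route as the paper: apply Proposition~\ref{prop313} to get the solution operator $F(\omega)$ of \eqref{351}--\eqref{352} with an $|\omega|^2$ bound uniform in $\mathrm{Im}(\omega)>\sigma_0$, then transfer to the time domain via Lemma~\ref{prop315}. Your extra care about the $\sigma$-monotonicity of the constant and the explicit verification of existence/uniqueness after inverting the transform only spells out what the paper compresses into its ``inverse Fourier--Laplace argument,'' so there is no substantive difference.
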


\begin{proof}
For $\mathrm{Im}(\omega)>\sigma_0>0$, it follows from Theorem \ref{prop313} that there exists a unique solution $U(\omega,\cdot)\in H_{\Delta,E}^1(D)$ of the problem \eqref{351}--\eqref{352} and
$$\|U(\omega,\cdot)\|_{H^{1}_{\Delta,E}(D)}\leq C_{\sigma_0,D}|\omega|^2\|G(\omega,\cdot)\|_{ H^{-1/2}(\Gamma)},$$
where $C_{\sigma_0,D}=2C_D/(\sigma_0\underline{\sigma_0}^3)$, here $C_D$ is the same constants in the proof of Proposition \ref{prop313}. Denote by $F(\omega,\cdot)\in\mathcal{B}( H^{-1/2}(\Gamma),H^{1}_E(D))$ the solution operator of the problem \eqref{351}--\eqref{352} such that $U(\omega,\cdot)=F(\omega,\cdot)G(\omega,\cdot)$. Then
$$\|F(\omega,\cdot)\|_{\mathcal{B}( H^{-1/2}(\Gamma),H^{1}_{\Delta,E}(D))}\leq C_{\sigma_0,D}|\omega|^2,\quad \mathrm{Im}(\omega,\cdot)>\sigma_0.$$
Using Theorem \ref{prop313} and Lemma \ref{prop315}, an inverse Fourier-Laplace argument implies that $u=f*g\in H_{\sigma}^{p}(\mathbb{R}_+,H_E^{1}(D))$ is the unique solution of the scattering problem \eqref{354}--\eqref{355}, in which $f$ and $g$ are the inverse Fourier-Laplace transform of $F(\omega,\cdot)$ and $G(\omega,\cdot)$, respectively. Moreover, Lemma \ref{prop315} implies
$$\|u\|_{H_{\sigma}^{p}(\mathbb{R}_+,H_{\Delta,E}^{1}(D))}\leq C'_{\sigma_0,D}\|g\|_{H_{\sigma_0}^{p+2}(\mathbb{R}_+, H^{-1/2}(\Gamma))}.$$
where $C'_{\sigma_0,D}$ is a constant depending only on $\sigma_0$ and $D$.
\end{proof}

At the end of this section, we provide the following proposition of the equivalence between the time domain scattering problems \eqref{353} and \eqref{354}--\eqref{355}.
\begin{Proposition}\label{prop301}
Let $\sigma>0$, $p\in\mathbb{R}$ and $g\in H_{\sigma}^{p+2}(\mathbb{R}_+, H^{-1/2}(\Gamma))$. If $u_1\in H_{\sigma}^{p}(\mathbb{R}_+,H^{1}_{\Delta,E}(D\cup\mathbb{R}^2_+))$ is the solution of the scattering problem \eqref{353} and $u_2\in H_{\sigma}^{p}(\mathbb{R}_+,H_{\Delta,E}^{1}(D))$ is the solution of the scattering problem \eqref{354}--\eqref{355}. Then
$$u_1=u_2 \quad \textup{in}\;\;\mathbb{R}\times D.$$
\end{Proposition}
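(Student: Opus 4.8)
The plan is to show that the restriction $u_1|_{\mathbb{R}\times D}$ solves the same reduced problem \eqref{354}--\eqref{355} that $u_2$ solves, and then invoke the uniqueness part of Theorem \ref{prop314} to conclude $u_1=u_2$ in $\mathbb{R}\times D$. The heart of the matter is therefore to verify that $u_1$ restricted to $D$ satisfies the transparent boundary condition \eqref{355} on $\Gamma$, since \eqref{354} is immediate: $u_1$ satisfies $\ddot u_1-\Delta u_1=f$ in $\mathbb{R}\times(D\cup\mathbb{R}_+^2)$, and $\operatorname{supp}(f)\subset\mathbb{R}_+\times\mathbb{R}_+^2$ is disjoint from $D$, so $\ddot u_1-\Delta u_1=0$ in $\mathbb{R}\times D$.

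First I would work in the ``Laplace domain''. Taking the Fourier-Laplace transform of the equation for $u_1$ gives $\Delta U_1(\omega,\cdot)+\omega^2 U_1(\omega,\cdot)=0$ in $\mathbb{R}_+^2$ (away from the source), together with $U_1^s:=U_1-U^i-U^\rho$ solving the Helmholtz equation in $\mathbb{R}_+^2$ with the radiation/finite-energy behaviour built into $\mathcal{L}'_\sigma(\mathbb{R}_+,\cdot)$. The derivation in Section 3.1 of the TBC — the separation of variables in $x_1$, the choice of the branch $\mathrm{e}^{\mathrm{i}\omega\sqrt{1-\xi_1^2/\omega^2}x_2}$ forced by $\mathrm{Re}(\mathrm{i}\omega\sqrt{1-\xi_1^2/\omega^2})\le 0$, and the resulting identity $\partial_\nu U_1^s|_{x_2=0}=\mathcal{T}^F U^0$ — applies verbatim to $U_1^s$. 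Combining this with $\partial_\nu(U^i+U^\rho)|_{x_2=0}=G(\omega,\cdot)$ yields $\partial_\nu U_1(\omega,\cdot)=\mathcal{T}^F U_1(\omega,\cdot)+G(\omega,\cdot)$ on $\Gamma$, i.e. \eqref{352}. Inverting the Fourier-Laplace transform gives that $u_1|_{\mathbb{R}\times D}$ satisfies \eqref{355}. The initial conditions and causality are inherited from $u_1\in H_{\sigma}^{p}(\mathbb{R}_+,\cdot)$, and the regularity $u_1|_D\in H_{\sigma}^{p}(\mathbb{R}_+,H^1_{\Delta,E}(D))$ follows from $u_1\in H_{\sigma}^{p}(\mathbb{R}_+,H^1_{\Delta,E}(D\cup\mathbb{R}_+^2))$ since $\Delta u_1=f-\ddot u_1$ and $f\equiv 0$ on $D$, so $\Delta u_1|_D=-\ddot u_1|_D\in L^2(D)$ with the right time-regularity.

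With $u_1|_{\mathbb{R}\times D}$ exhibited as a solution of \eqref{354}--\eqref{355} in the class $H_{\sigma}^{p}(\mathbb{R}_+,H^1_{\Delta,E}(D))$, Theorem \ref{prop314} (more precisely, its uniqueness assertion, which rests on the coercivity estimate of Proposition \ref{prop313} via Lax--Milgram in the Laplace domain) forces $u_1|_{\mathbb{R}\times D}=u_2$, which is the claim. The main obstacle I anticipate is the rigorous passage through the transparent boundary condition: one must justify that $U_1^s$ genuinely has the representation $\mathcal{F}_{x_1}[U_1^s]=\mathrm{e}^{\mathrm{i}\omega\sqrt{1-\xi_1^2/\omega^2}x_2}\mathcal{F}_{x_1}[U^0]$ — i.e. that the ``outgoing'' branch is the correct one and the other exponentially growing solution is excluded — which is exactly where the finite-energy/causality hypothesis $u_1\in\mathcal{L}'_\sigma(\mathbb{R}_+,H^1_{\Delta,E}(D\cup\mathbb{R}_+^2))$ is used, together with the sign computation $\mathrm{Re}(\mathrm{i}\omega\sqrt{1-\xi_1^2/\omega^2})\le 0$ already carried out in Section 3.1. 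The remaining steps — transferring equations and traces between the time domain and the Laplace domain, and checking function-space membership — are routine given the groundwork laid in Sections 3 and 4.
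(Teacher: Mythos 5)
Your proposal is correct and follows essentially the same route as the paper's own proof: restrict $u_1$ to $\mathbb{R}\times D$, use $\operatorname{supp}(f)\subset\mathbb{R}_+\times\mathbb{R}_+^2$ together with the Section~3 derivation of the TBC to show this restriction solves \eqref{354}--\eqref{355}, and then invoke the uniqueness in Theorem \ref{prop314}. In fact you spell out the Laplace-domain verification of the boundary condition in more detail than the paper does, which only strengthens the argument.
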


\begin{proof}
If $u_1\in H_{\sigma}^{p}(\mathbb{R}_+,H^{1}_{\Delta,E}(D\cup\mathbb{R}^2_+))$ is the solution of the problem  \eqref{353}, combining $\mathrm{supp}(f)\subset\mathbb{R}_+\times\mathbb{R}_+^2$ with the analysis of the TBC \eqref{355}, we can get that $u_1|_{\mathbb{R}\times D}\in H_{\sigma}^{p}(\mathbb{R}_+,H_{\Delta,E}^{1}(D))$ is the solution of the problem \eqref{354}--\eqref{355}.

Using Theorem \ref{prop314}, the unique solvability of the scattering problem \eqref{354}--\eqref{355} implies
$$u_1=u_2 \quad \text{in}\;\;\mathbb{R}\times D.$$
The proof is completed.
\end{proof}

%====================================================

\section{Boundary integral equations}

In this section, we will show the procedure to solve the time domain scattering problem \eqref{318}--\eqref{321} and the CQ method to turn the calculation of the time domain problem into that of the corresponding frequency domain problems.

The retarded single layer potential on $\partial D$ is defined as (see \cite{Chen2010sampling})
\begin{equation*}
(SL_{\partial D}\phi)(t,x):=\int_{\partial D}\int_0^{t}k(t-\tau,|x-y|)\phi(y,\tau)\,\mathrm{d}\tau \mathrm{d}s_y,\quad t\in \mathbb{R},\, x\in \mathbb{R}^2\backslash\partial D.
\end{equation*}
The retarded double layer potential on $\partial D$ is
\begin{equation*}
(DL_{\partial D}\phi)(t,x):=\int_{\partial D}\int_0^{t}\frac{\partial k(t-\tau,|x-y|)}{\partial\nu(y)} \phi(y,\tau)\,\mathrm{d}\tau \mathrm{d}s_y,\quad t\in \mathbb{R},\, x\in \mathbb{R}^2\backslash\partial D,
\end{equation*}
where $\partial k/\partial {\nu(y)}$ is the normal derivative of $k$ on $\partial D$ with respect to $y$. Also of importance are the single and double layer operators on $\partial D$ defined as
\begin{equation*}
(S_{\partial D}\phi)(t,x):=\int_{\partial D}\int_0^{t}k(t-\tau,|x-y|) \phi(y,\tau)\,\mathrm{d}\tau \mathrm{d}s_y,\quad t\in \mathbb{R},\, x\in \partial D
\end{equation*}
and
\begin{equation*}
(K_{\partial D}\phi)(t,x):=\int_{\partial D}\int_0^{t}\frac{\partial k(t-\tau,|x-y|)}{\partial\nu(y)} \phi(y,\tau)\,\mathrm{d}\tau \mathrm{d}s_y,\quad t\in \mathbb{R},\, x\in \partial D,
\end{equation*}
respectively.

Denote by $\gamma^- u$ and $\gamma^+ u$ the restriction of $u$ to $\partial D$ from interior and exterior and by $\partial_{\nu}^- u$ and $\partial_{\nu}^+ u$ the normal derivatives on $\partial D$ from interior and exterior, respectively. Then the jumps are defined as
\begin{equation*}
[\![u]\!]:=\gamma^- u-\gamma^+ u,\quad [\![\partial_{\nu} u]\!]:=\partial_{\nu}^- u-\partial_{\nu}^+ u.
\end{equation*}

The Kirchhoff's formula (\cite{sayas2011retarded}) for the solution of the wave equation is
\begin{equation}\label{322}
u=SL_{\partial D}[\![\partial_{\nu} u]\!]-DL_{\partial D}[\![u]\!] \quad \text{in}\;\;\mathbb{R}\times D.
\end{equation}
On the boundary $\partial D$, we have
$$\frac{1}{2}u=S_{\partial D}[\![\partial_{\nu} u]\!]-K_{\partial D}[\![u]\!]\quad \text{on}\;\;\mathbb{R}\times \partial D.$$
We are concerned with the time domain scattering problem in the bounded domain $D$. Assume that $u\equiv 0$ in $\mathbb{R}\times (\mathbb{R}^2\backslash\overline{D})$, thus
$$[\![\partial_{\nu} u]\!]=\partial_{\nu}^- u,\quad [\![u]\!]=\gamma^- u\quad \text{on}\;\;\mathbb{R}\times \partial D.$$
For the sake of simplicity, we write $\partial_{\nu} u=\partial_{\nu}^- u$, $u=\gamma^- u$ on $\partial D.$ Then we have the following RPBIEs for the scattering problem \eqref{318}--\eqref{321}:
\begin{align}
\label{323}\frac{1}{2}u&=S_{\Gamma}(\mathcal{T}u+g)-K_{\Gamma}u+S_\Lambda\partial_{\nu} u\quad \text{on} \; \mathbb{R}\times\Gamma,\\
\label{324}0&=S_{\Gamma}(\mathcal{T}u+g)-K_{\Gamma}u+S_\Lambda\partial_{\nu} u\quad \text{on} \; \mathbb{R}\times \Lambda.
\end{align}

We can get $u|_{\mathbb{R}\times\Gamma}$ and $\partial_{\nu} u|_{\mathbb{R}\times \Lambda}$ by solving \eqref{323}--\eqref{324}. Then $\partial_{\nu} u|_{\mathbb{R}\times\Gamma}$ is given by \eqref{320} and $u|_{\mathbb{R}\times D}$ is given by the Kirchhoff's formula \eqref{322}.

We recall the CQ method (\cite{banjai2008rapid,lubich1988convolution}) for the time discretization of the RPBIEs \eqref{323}--\eqref{324}. The time discretization is implemented in $t\in[0,T]$. The terminal time $T$ is chosen such that the energy of the scattered data inside the interested domain is negligible when $t>T$. We have the discretization
$$t_j=j\kappa,\ j=0,1,\ldots,N,\ \kappa=T/N.$$

To solve an integral equation with convolution structure such as
$$S_{\Gamma}u=h,\quad \text{on}\;\;\mathbb{R}\times\Gamma$$
The CQ method leads to the decoupled problems (\cite{banjai2008rapid})
\begin{equation*}
S^F_{\Gamma}\hat{u}_l(\omega_l,x)=\hat{h}_l(\omega_l,x),\quad x\in\Gamma,\,l=0,1,\ldots,N,
\end{equation*}
where $S^F_{\Gamma}$ is denoted as the Fourier-Laplace transform of the operator $S_{\Gamma}$, $\hat{u}_l(\cdot)$ and $\hat{h_l}$ are, respectively, the discrete Fourier transform of $u_j(\cdot):=u(t_j,\cdot)$ and $h_j:=h(t_j)$ with respect to $j$, $\omega_l\in\mathbb{C}$ are constants depending on the time discretization. We choose
$$\omega_l=\frac{\mathrm{i}}{2\kappa}(\xi_l^2-4\xi_l+3),$$
where
$$\xi_l=\gamma\mathrm{e}^{-\mathrm{i}\frac{2\pi l}{N_T}},\quad l=0,\ldots,N_T-1.$$
In this paper, we suggest to use the same strategy as that in \cite{banjai2008rapid} to choose the stability parameter $\gamma$.

Notice that
\begin{align*}
\mathcal{T}v&=\frac{1}{2\pi}\int_\mathbb{R}\mathrm{e}^{\mathrm{i}\xi_1x_1}\mathcal{L}^{-1}\left[ \mathrm{i}\omega\sqrt{1-\xi_1^2/\omega^2}\mathcal{L}\circ\mathcal{F}_{x_1}[E_0(v)](\omega,\xi_1,0)\right]\mathrm{d}\xi_1\\
&=\frac{1}{2\pi}\int_\mathbb{R}\mathrm{e}^{\mathrm{i}\xi_1x_1}\mathcal{L}^{-1}\left[ \mathrm{i}\omega\sqrt{1-\xi_1^2/\omega^2}\right]*\mathcal{F}_{x_1}[E_0(v)](t,\xi_1,0)\mathrm{d}\xi_1,
\end{align*}
and
\begin{equation*}
g=2\frac{\partial u^i}{\partial \nu}=2\partial_{\nu} k(t,|x-z|)*\lambda(t),\quad x\in\mathbb{R}^2_0,
\end{equation*}
where $z:=(z_{1},z_{2})$ is the source point. Then we get
\begin{equation*}
S_{\Gamma}\mathcal{T}v=\frac{1}{2\pi}\int_\Gamma\int_\mathbb{R}\mathrm{e}^{\mathrm{i}\xi_1x_1} k(t,|x-y|)*\mathcal{L}^{-1}\left[ \mathrm{i}\omega\sqrt{1-\xi_1^2/\omega^2}\right]*\mathcal{F}_{x_1}[E_0(v)](t,\xi_1,0)\mathrm{d}\xi_1\mathrm{d}s_y
\end{equation*}
and
\begin{equation*}
S_{\Gamma}g=2\int_\Gamma k(t,|x-y|)*\partial_{\nu} k(t,|y-z|)*\lambda(t)\mathrm{d}s_y.
\end{equation*}

To solve the integral equations \eqref{323}--\eqref{324}, the convolution quadrature method leads to the decoupled problems
\begin{align}
\label{328}\frac{1}{2}\hat{u}_l(\omega_l,\cdot)=S^F_{\Gamma}\{\mathcal{T}^F\hat{u}_l+G^F\hat{\lambda}_l\} (\omega_l,\cdot) -K^F_{\Gamma}\hat{u}_l(\omega_l,\cdot)+S^F_\Lambda\partial_{\nu} \hat{u}_l(\omega_l,\cdot)&\quad \text{on}\;\; \Gamma,\\
\label{329}0=S^F_{\Gamma}\{\mathcal{T}^F\hat{u}_l+G^F\hat{\lambda}_l\} (\omega_l,\cdot) -K^F_{\Gamma}\hat{u}_l(\omega_l,\cdot)+S^F_\Lambda\partial_{\nu} \hat{u}_l(\omega_l,\cdot)&\quad \text{on}\;\; \Lambda,
\end{align}
where $l=0,1,\ldots,N$, $\hat{u}_l(\cdot)$ and $\hat{\lambda}_l(\cdot)$ are, respectively, the discrete Fourier transform of $u_j(\cdot):=u(t_j,\cdot)$ and $\lambda_j:=\lambda(t_j)$ with respect to $j$. The operators are
\begin{align*}
(S^F_{\Gamma}\varphi)(\omega,x)&=\frac{\mathrm{i}}{4}\int_{\Gamma}H_0^{(1)}(\omega|x-y|)\varphi(\omega,y)\mathrm{d}s_y,\\
(K^F_{\Gamma}\varphi)(\omega,x)&=\frac{\mathrm{i}}{4}\int_{\Gamma}\partial_{\nu(y)}H_0^{(1)}(\omega|x-y|)\varphi(\omega,y)\mathrm{d}s_y,\\
(\mathcal{T}^F\varphi)(\omega,x)&=\frac{\mathrm{i}\omega}{2\pi}\int_\mathbb{R}\mathrm{e}^{\mathrm{i}\xi_1x_1} \sqrt{1-\xi_1^2/\omega^2}\mathcal{F}_{x_1}(\varphi)(\omega,\xi_1,x_2)\mathrm{d}\xi_1,\\
(G^F\varphi)(\omega,x)&=-\frac{\mathrm{i}\omega}{2}H_1^{(1)}(\omega|x-z|)\frac{x_2-z_{2}}{|x-z|}\varphi(\omega),
\end{align*}
where $H_n^{(1)}$ is the Hankel function of the first kind of order $n$.

Finally, we just need to solve the Helmholtz problems \eqref{328}--\eqref{329} instead of the time domain scattering problem \eqref{323}--\eqref{324}.

%====================================================

\section{Conclusion}

We have analyzed the time domain acoustic scattering from open cavities. A TBC have been developed to get an equivalent initial boundary value problem. The well-posedness of the reduced scattering problem have been proved. Moreover, RPBIEs have been established to solve the reduced problem.

Our future work will include the analysis of the existence and uniqueness of the solutions for the RPBIEs and the iteration method for the inverse scattering problem.

%====================================================

\section*{Acknowledgements}

The work of Bo Chen was supported by the NSFC [No. 11671170] and the Scientific Research Foundation of Civil Aviation University of China [No. 2017QD04S]. The work of Fuming Ma was supported by the NSFC [No. 11771180]. The work of Yukun Guo was supported by the NSFC [No. 11601107, 41474102 and 11671111].

\bibliographystyle{abbrv} % plain, alpha, abbrv, acm, siam, apalike, etc.
\bibliography{reference}

\end{document}